\newtheorem{lemma}{Lemma}
\newtheorem{theorem}{Theorem}
\renewcommand{\phi}{\varphi}
\newcommand{\R}{\mathbb{R}}
\newcommand{\Sym}{\mathbb{S}}
\newcommand{\tr}{\mathrm{tr}}
\title{Compact convex sets with prescribed facial dimensions}
\author{Vera Roshchina\thanks{School of Science, RMIT University, vera.roshchina@rmit.edu.au}, Tian Sang\thanks{School of Science, RMIT University, s3556268@student.rmit.edu.au} and  David Yost\thanks{Centre for Informatics and Applied Optimisation, Federation University, d.yost@federation.edu.au}}
\begin{document}

\maketitle

\begin{abstract}
While faces of a polytope form a well structured lattice, in which faces of each possible dimension are present, this is not true for general compact convex sets. We address the question of what dimensional patterns are possible for the faces of general closed convex sets. We show that for any finite sequence of positive integers there exist compact convex sets which only have extreme points and faces with dimensions from this prescribed sequence. We also discuss another approach to dimensionality, considering the dimension of the union of all faces of the same dimension. We show that the questions arising from this approach are highly nontrivial and give examples of convex sets for which the sets of extreme points have fractal dimension.
\end{abstract}

\section{Introduction}

It is well known that faces of polyhedral sets have a well-defined structure (see \cite[Chap.~2]{Ziegler}). In particular, every face of a polyhedral set is a polyhedron, and there are no `gaps' in the dimensions of their faces. On the other hand, a simple reformulation of \cite[Corollary~3.7]{HillWaters} asserts that in the compact convex set of all positive semidefinite $n\times n$ matrices with trace 1, every proper face has dimension $k^2-1$ for some $k<n$. Thus there are naturally occuring examples with serious gaps in the dimensions of their faces. For other descriptions of this phenomenon, see Theorem 2.25 and the explanation that follows it in \cite{Tuncel} (for the cone $\Sym_+^n$ of positive semidefinite $n\times n$ matrices), or \cite[Theorem~5.36]{StateSpaces} (for the state space of a $C^*$-algebra). This raises the question, what are the possible patterns for the dimensions of faces of compact convex sets?


Recall that a {\em face} $F$ of a closed convex set $C\subset \R^n$ is a closed convex subset of $C$ such that for any point $x\in F$ and for any line segment $[a,b]\subset C$ such that $x\in (a,b)$, we have $a,b\in F$. The fact that $F$ is a face of $C$ is expressed as $F\lhd C$.

The difference between this definition and the definition of faces of polyhedral sets as intersections with supporting hyperplanes is due to the fact that for nonpolyhedral convex sets faces are not necessarily {\em exposed}: it may happen that a face cannot be represented as the intersection of a supporting hyperplane with the set. Some classic examples are shown in Figs.~\ref{fig:Tablet} (see \cite{Rockafellar}) and \ref{fig:nonexp2d} (see \cite{Pataki}).
\begin{figure}[ht]
	\centering
	\includegraphics[height = 80pt]{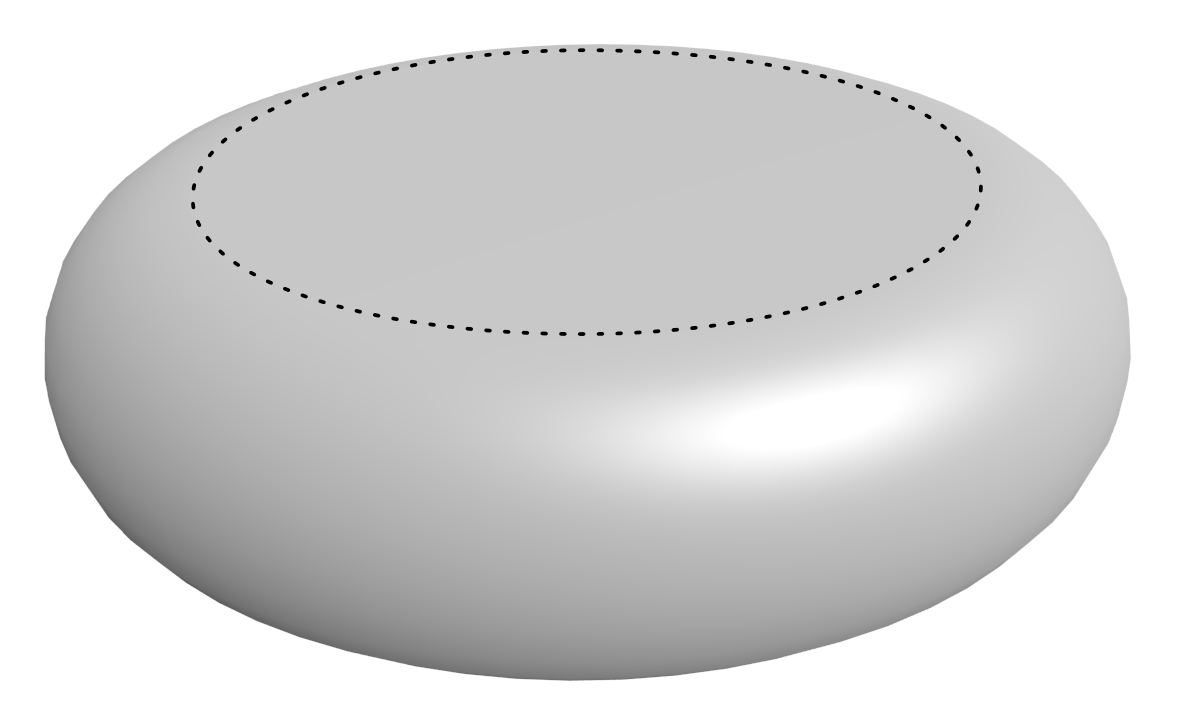}
	\caption{Convex hull of a torus is not facially exposed (the dashed line shows the unexposed extreme points).}
	\label{fig:Tablet}
\end{figure}
\begin{figure}[ht]
	\centering
	\includegraphics[height = 100pt]{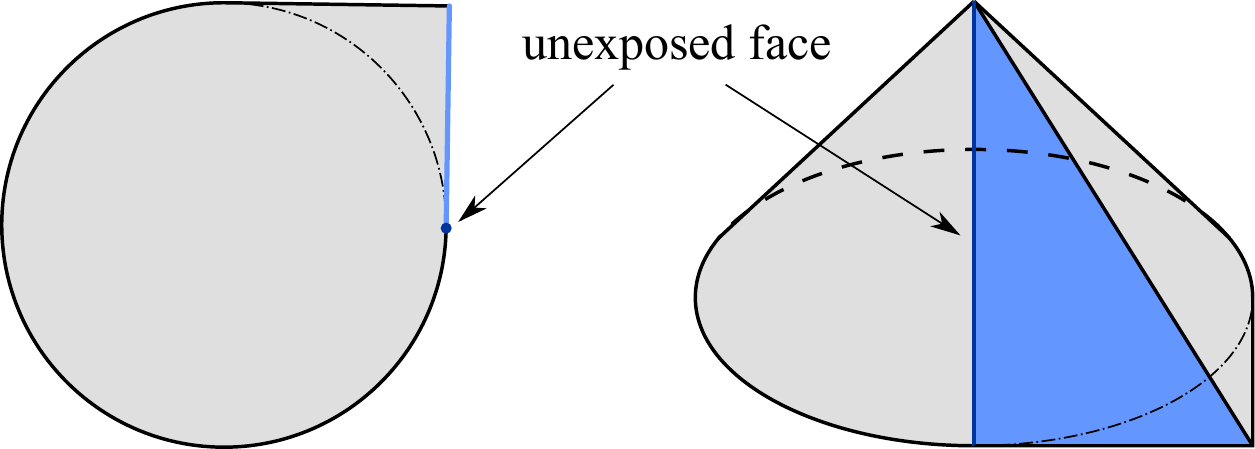}
	\caption{An example of a two dimensional set and a three dimensional cone that have an unexposed face.}
	\label{fig:nonexp2d}
\end{figure}

The dimension of a convex set is the dimension of its affine hull, same for the face. We refer the reader to the classic textbooks \cite{Rockafellar,JBHU}. We also would like to mention that some problems related to dimensions of convex sets were studied in the literature. For instance, \cite{Eckhardt} focusses on the dimensions of convex sets coming from optimisation problems with inequality constraints, and \cite{GrunbaumIntersection} deals with the results related to the dimensions of intersections of convex sets. However, we were unable to identify references that would address the existence of convex sets with prescribed facial dimensions.

The total number of possible face patterns in $n$ dimensional space is the cardinality of the powerset of $n$ elements. This is because every set contains zero-dimensional faces (because of the Krein-Milman theorem). We can write down face patterns either as  an increasing  sequence of positive numbers $(d_1, d_2, \dots, d_k)$, which encode all possible dimensions of faces of positive dimension present in a set, or as a binary sequence $(b_1, b_2, \dots, b_n)$, where $b_i=1$ if a face of dimension $i$ is present in the set, and $b_i=0$ otherwise. For example, the dimensional pattern of a tetrahedron is either $(1,2,3)$ in the $d$-notation or $(1,1,1)$ in the binary notation, and the pattern of a closed Euclidean ball is either $(n)$ or $(0,0,\dots, 1)$, as it does not have any faces except for zero- and $n$-dimensional ones. We will use the first encoding style via an increasing sequence of positive numbers in what follows.

The easiest cases to classify are the ones that we can visualise, i.e. the convex compact sets in zero- one-, two- and three-dimensional  spaces. In dimension zero we have singletons $\{x\}$ for any real $x$ with pattern $()$, in one-dimensional space there is no freedom: the only fully dimensional convex compact sets are line segments, with the only possible pattern $(1)$. On the plane the two-dimensional possibilities are exhausted by a circle and a triangle, with patterns $(2)$ and $(1,2)$ respectively (see Fig.~\ref{fig:2DPatterns}). Therefore for the two dimensional case we have four possibilities: $()$, $(1)$, $(1,2)$ and $(2)$, which coincides with the cardinality of the powerset of two: $2^2 = 4$.
\begin{figure}[ht]
	\centering\includegraphics[height=90pt]{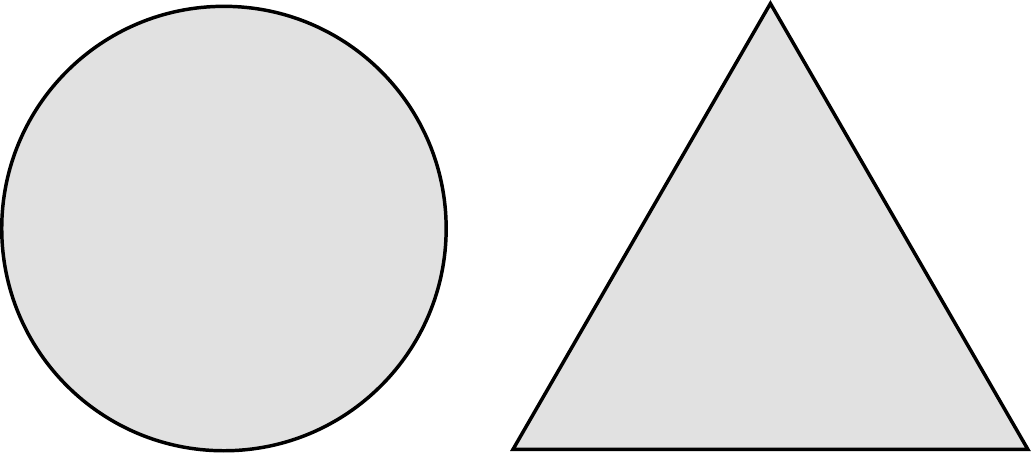}
	\caption{All possible face patterns of fully dimensional sets in two dimensional case are given by a disk and a triangle.}
	\label{fig:2DPatterns}
\end{figure}

In three dimensions the possibilities for fully dimensional sets are exhausted by the unit ball (3), the tetrahedron (1,2,3), the unit ball intersected with a closed half-space (2,3), and the convex hull of a circle in the plane and two points on opposite sides of the plane (1,3) (see Fig.~\ref{fig:ThreeDimensionsS}), together with the lower dimensional examples we have in total $2^3= 8$ possibilities.
\begin{figure}[ht]
	\centering\includegraphics[height=90pt]{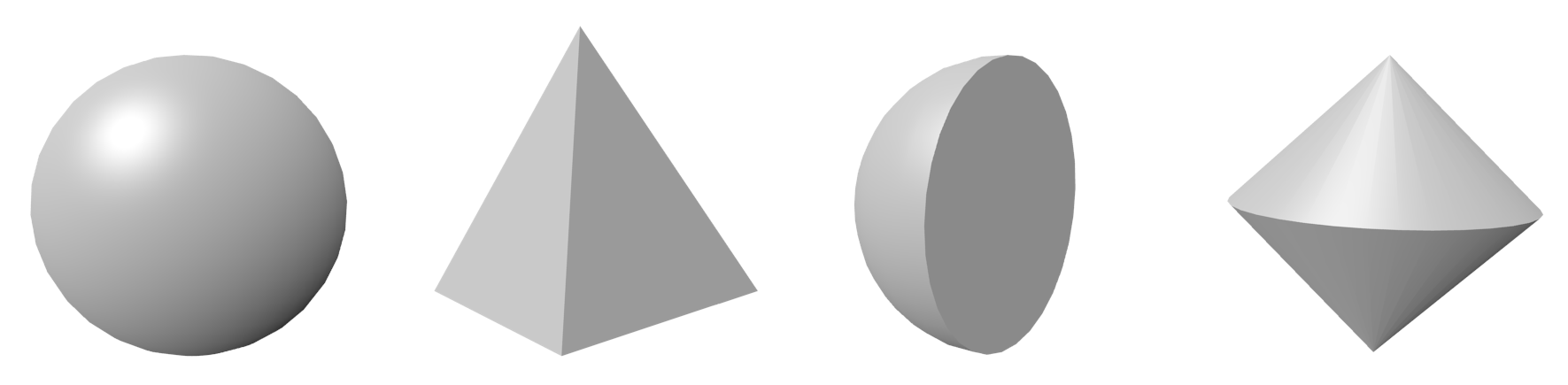}
	\caption{All possible facial patterns for the three dimensional sets}
	\label{fig:ThreeDimensionsS}
\end{figure}

\section{Main Result}

We show that all patterns of facial dimensions can be realised in a compact convex set.

\begin{theorem}\label{thm:everythingispossible}
For any increasing sequence of positive integers
$$
d = (d_1,d_2, \dots, d_k)
$$
there exists a compact convex set in $d_k$-dimensional space such that the vector $d$ describes the pattern of facial dimensions for this set.
\end{theorem}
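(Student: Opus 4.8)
The plan is to induct on the length $k$ of the sequence, constructing the required set by repeatedly \emph{inflating} a lower-dimensional example into one higher top dimension without ever creating faces of intermediate dimension. For the base case $k=1$ the Euclidean ball $\B^{d_1}\subset\R^{d_1}$ has only extreme points and itself as faces, and so realises the pattern $(d_1)$. The inductive step rests on the following inflation construction: suppose $C\subset\R^{m}$ is a compact convex set of dimension $m=d_{k-1}$ whose faces have dimensions exactly $\{0,d_1,\dots,d_{k-1}\}$; embed $C$ as a flat $m$-dimensional slice of $\R^{n}$, where $n=d_k>m$, and form the tube
\[
T \;=\; C \oplus \delta\B^{n}\;=\;\{\,z\in\R^n:\operatorname{dist}(z,C)\le\delta\,\},
\]
the closed $\delta$-neighbourhood of $C$, for some $\delta>0$. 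I claim that the faces of $T$ have dimensions exactly $\{0,d_1,\dots,d_{k-1},n\}$, so that $T$ realises $(d_1,\dots,d_k)$ and the induction closes. The low-dimensional instances are reassuring: tubing a segment gives a capsule (pattern $(1,3)$ in $\R^3$), tubing a disk gives a flat rounded lens (pattern $(2,3)$), and tubing a square gives a rounded slab (pattern $(1,2,3)$).

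To see why the claim should hold, I would work with the boundary description $\partial T=\{\,c+\delta v: c\in C,\ v\in N_C(c),\ |v|=1\,\}$, where $N_C(c)$ is the normal cone of $C$ at $c$. Because $C$ is flat, $N_C(c)$ always contains the whole $(n-m)$-dimensional orthogonal complement of $\operatorname{aff}(C)$, and the metric projection $z\mapsto c(z)$ onto $C$ is single valued on $\partial T$. The structural heart of the argument is to prove that the face lattice of $T$ consists of $T$ itself together with the translates $F+\delta v$ with $F\lhd C$ and $v$ a unit normal vector; each such face is a \emph{translate} of a face of $C$ and hence has dimension $\dim F\in\{0,d_1,\dots,d_{k-1}\}$. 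In particular the purely normal directions $v\perp\operatorname{aff}(C)$ expose translates of all of $C$, supplying the dimension $m=d_{k-1}$ even though $C$ itself is not a face of $T$; and since every proper face $F+\delta v$ satisfies $\dim F\le m$, no face of dimension strictly between $m$ and $n$ can arise.

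The main obstacle is exactly this structural claim, and in particular the exclusion of \emph{spurious} faces: Figures~\ref{fig:Tablet}--\ref{fig:nonexp2d} warn that $T$ may carry non-exposed faces, so the argument must treat an arbitrary face $G\lhd T$, exposed or not, on the same footing. I would choose a relative-interior point $z_0=c_0+\delta v_0$ of $G$ with $c_0$ the nearest point of $C$, let $F_0$ be the smallest face of $C$ with $c_0\in\operatorname{relint}F_0$, and prove $G=F_0+\delta v_0$. The inclusion $F_0+\delta v_0\subseteq G$ follows because the normal cone is constant on $\operatorname{relint}F_0$, so every point $c+\delta v_0$ with $c\in F_0$ lies on $\partial T$ and $z_0$ is an interior convex combination of such points, forcing them into the face $G$; the reverse inclusion follows because the exposed face of $T$ in the direction $v_0$ already contains $z_0\in\operatorname{relint}G$ and hence contains all of $G$. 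The delicate point is precisely that this must hold at the ``rounded corners'' over a proper face $F$ of dimension $f$, where the boundary of $T$ is flat in the $f$ directions of $F$ (producing only translates of $F$ and of its subfaces) yet strictly convex in the complementary normal directions (producing only extreme points there), so that no face of dimension between $f$ and $m$ is created. Once $\dim G=\dim F_0$ is established for every proper face, the dimension count is immediate, and the induction delivers a compact convex set in $\R^{d_k}$ with the prescribed facial pattern.
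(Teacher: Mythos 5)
Your construction coincides with the paper's: embedding the lower\--dimensional example realising the truncated pattern into $\R^{d_k}$ and taking its Minkowski sum with a ball is exactly the paper's $C:=B+Q'$ (the radius $\delta$ and the induction on the length $k$ rather than on $d_k$ are cosmetic differences). Where you genuinely diverge is in how the facial structure of the sum is justified. The paper isolates a clean, general lemma --- for any compact convex $P,Q$, every face of $P+Q$ is $F_P+F_Q$ with $F_P\lhd P$, $F_Q\lhd Q$ --- which, applied with $Q$ a ball, instantly shows that every proper face of your tube $T$ is a translate of a face of $C$, exposed or not, with no normal-cone analysis at all. Your bespoke metric-projection argument can be made to work, but as written the reverse inclusion has a gap: the step ``the exposed face of $T$ in direction $v_0$ contains $z_0\in\operatorname{relint}G$, hence contains $G$'' only yields $G\subseteq E_C(v_0)+\delta v_0$, where $E_C(v_0)$ is the exposed face of $C$ in direction $v_0$, and this set can be strictly larger than your minimal face $F_0$ (take $c_0$ a vertex of a square and $v_0$ the outward normal of an adjacent edge: $F_0=\{c_0\}$ but $E_C(v_0)$ is the whole edge). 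To close the argument you need one more step --- for instance, $G$ is a face of $T$ contained in the face $E_C(v_0)+\delta v_0\lhd T$, hence $G-\delta v_0$ is a face of $E_C(v_0)$ and therefore of $C$, and since it contains $c_0$ in its relative interior it must equal $F_0$. For the dimension count it even suffices to know that $G$ is a translate of \emph{some} face of $C$, which the repaired argument gives. In short: the paper's Minkowski-sum lemma buys a shorter and more reusable route to the same structural fact; your route, once patched, buys a more explicit description of exactly which translates $F+\delta v$ occur, which the paper does not need.
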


To prove this, we need the following technical lemma, which is surely known, but we were not able to identify it in the literature. We hence provide a short proof here as well.

\begin{lemma}\label{lem:sumoffaces} Let $P,Q\subset \R^n$ be nonempty convex compact sets, and let $C=P+Q$. Then every face of  $C$ is the Minkowski sum of faces of $P$ and $Q$. More precisely,
$$
\forall \, F\lhd C \quad \exists F_P\lhd P, F_Q\lhd Q \text{ such that } F = F_P+F_Q.
$$
\end{lemma}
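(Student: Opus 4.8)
The plan is to characterize faces of the Minkowski sum $C = P+Q$ via supporting functionals and then handle the non-exposed case by an approximation or limiting argument. The key observation is that for a convex compact set, faces are intimately connected with the normal cone structure. Given a face $F \lhd C$, I would first pick a relative interior point $x \in \operatorname{ri}(F)$ and write $x = p + q$ with $p \in P$, $q \in Q$. The natural candidates are then $F_P := \{p' \in P : p' + Q \cap C \text{ contains a segment through } x\}$ — but more cleanly, I would define $F_P$ and $F_Q$ as the smallest faces of $P$ and $Q$ containing $p$ and $q$ respectively, i.e. the faces whose relative interiors contain these points.

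First I would establish the easy inclusion-type fact that if $F_P \lhd P$ and $F_Q \lhd Q$, then $F_P + F_Q$ is a face of $C$ whenever these are the faces generated by a point of $F$; the reverse containment is the substantive direction. The cleanest route is via the \emph{face generated by a point}: for any $x \in C$, there is a unique smallest face $F_x$ of $C$ containing $x$, characterized by $x \in \operatorname{ri}(F_x)$. The core lemma I would prove is that for $x = p+q$, the smallest face of $C$ containing $x$ equals $F_p + F_q$, where $F_p, F_q$ are the smallest faces of $P, Q$ containing $p, q$. To see the inclusion $F_p + F_q \subseteq F_x$, I would use the defining property of faces: since $p \in \operatorname{ri}(F_p)$, any point $p'$ of $F_p$ can be written so that $p$ lies in an open segment with endpoints in $F_p$; adding $q$ shows $x = p+q$ lies in an open segment of $C$ with endpoints $p'+q$ and a reflected point, forcing those endpoints into $F_x$, and iterating over both summands gives $F_p + F_q \subseteq F_x$.

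For the reverse inclusion $F_x \subseteq F_p + F_q$, I would argue that $F_p + F_q$ is itself a face of $C$ containing $x$ in its relative interior, whence by minimality of $F_x$ it must coincide with $F_x$. To verify $F_p + F_q \lhd C$, I take any segment $[a,b] \subseteq C$ with an interior point $z \in F_p + F_q$, decompose $a = a_P + a_Q$, $b = b_P + b_Q$ and $z = z_P + z_Q$ with the summands in $P$ and $Q$ respectively; the difficulty is that the decomposition of $z$ as a point of $C$ need not respect the decomposition $z = z_P + z_Q$ with $z_P \in F_p$. I would resolve this by invoking the uniqueness afforded by relative interiors: a point in $\operatorname{ri}(F_p) + \operatorname{ri}(F_q)$ has all its $P+Q$ decompositions landing in $F_p \times F_q$ up to the appropriate affine structure, so the face property of $F_p$ in $P$ and $F_q$ in $Q$ transfers to $F_p + F_q$ in $C$.

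The main obstacle will be precisely this last step — ensuring that the Minkowski decomposition of an interior point of a segment in $C$ can be chosen compatibly with the summand faces, since decompositions $z = z_P + z_Q$ are generally non-unique. I expect to control this by working with support functionals: the face $F_p$ of $P$ is exposed by some direction $u$ on the relative subspace, and I would show that the same directions that expose $F_p$ and $F_q$ (intersected appropriately over the lineality of the face) expose $F_p + F_q$ in $C$, using the additivity of support functions $h_{P+Q} = h_P + h_Q$. If $F$ is exposed this is immediate; for non-exposed faces I would pass to the chain of faces generated by relative interior points, where at each stage the smallest containing face is exposed in the next-larger face, so the additivity of support functions applies stagewise and the general result follows by the minimal-face characterization.
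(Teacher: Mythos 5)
Your first inclusion ($F_p+F_q\subseteq F_x$, by sliding along open segments through the relative interior points $p$ and $q$ one summand at a time) is correct, but the reverse inclusion rests on a claim that is false: for an \emph{arbitrary} decomposition $x=p+q$ of a point $x\in\operatorname{ri}(F)$, the set $F_p+F_q$ need not be a face of $C$ and need not equal $F$. Take $P=Q=[0,1]\subset\R$, so $C=[0,2]$, and let $F=C$ with $x=1\in\operatorname{ri}(F)$. Choosing the decomposition $1=1+0$ gives $F_p=\{1\}$ and $F_q=\{0\}$, so $F_p+F_q=\{1\}$, which is a proper subset of $F$ and is not a face of $C$ at all. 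The same example kills the uniqueness claim you lean on (``all $P+Q$ decompositions of a point of $\operatorname{ri}(F_p)+\operatorname{ri}(F_q)$ land in $F_p\times F_q$''): the point $1$ also decomposes as $0.5+0.5$ with $0.5\notin\{1\}$. The support-functional patch cannot rescue this either, because there need be no single direction exposing both $F_p$ in $P$ and $F_q$ in $Q$ (here $\{1\}$ is exposed in $P$ by $u=+1$ while $\{0\}$ is exposed in $Q$ by $u=-1$, and the exposed face of $C$ in direction $+1$ is $\{2\}$, not $\{1\}$). The lemma is true only for a \emph{well-chosen} decomposition of $x$, and your argument never makes, or justifies, such a choice.

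There are two ways to repair this. The chain-of-exposed-faces idea in your last paragraph does work if you run it on its own terms: additivity of support functions shows that every \emph{exposed} face of $P+Q$ splits as the sum of the faces of $P$ and $Q$ exposed by the \emph{same} functional, and every face of a compact convex set is reached by a finite chain in which each term is exposed in its predecessor; inducting along the chain gives the result. But then $F_P$ and $F_Q$ are produced stagewise by the exposing functionals, not as the minimal faces of the summands of an arbitrary splitting of $x$. The paper avoids all of this with a more elementary device: it defines $F_P$ as the set of \emph{all} $P$-summands occurring in \emph{any} decomposition of \emph{any} point of $F$ (and similarly $F_Q$), which makes the choice problem disappear; the identity $F=F_P+F_Q$ then follows from a short midpoint argument (if $x+v\in F$ and $u+y\in F$, then $x+y$ and $u+v$ lie in $C$ and have midpoint in $F$, so $x+y\in F$ by the face property), and the face property of $F_P$ transfers directly from that of $F$ by translating test segments by a fixed $y\in F_Q$. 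You should either adopt that definition of the candidate faces or commit fully to the exposed-face induction; the minimal-face-of-a-point route as written does not close.
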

\begin{proof} Let $F$ be a nonempty face of $C$. We construct two sets
$$
F_P:= \{x\in P\,|\, \exists y \in Q, x+y \in F\}, \qquad F_Q:= \{y\in Q\,|\, \exists x \in P, x+y \in F\}.
$$
Both $F_P$ and $F_Q$ are nonempty since $F$ is nonempty.

First we show that $F=F_P+F_Q$. It is obvious that $F\subset F_P+F_Q$, and it remains to show the reverse inclusion. For that, pick an arbitrary $x\in F_P$, $y\in F_Q$. We will next show that $z=x+y\in F$.

By the definition of $F_P$ and $F_Q$ there exist $u\in P$ and $v\in Q$ such that $x+v\in F$ and $y+u\in F$. If $x=u$ or $y=v$, there is nothing to prove, as in this case $z=u+v\in F$. Otherwise, by the convexity of $F$ we have
$$
z' = \frac{x+v}{2}+\frac{y+u}{2}\in F.
$$
At the same time, notice that $x+y\in P+Q\subset C$; likewise, $u+v\in P+Q \subset C$, and $z'\in (x+y, u+v)$. Since $F$ is a face of $C$, this yields $z=x+y\in F$.

It remains to show that both $F_P$ and $F_Q$ are faces of $P$ and $Q$ respectively. First note that both are closed compact sets, and that $F_Q\subset Q $ and $F_P\subset P$.

Let $x\in F_P$, and pick any interval $[a,b]\subset P$ such that $x\in (a,b)$. By the definition of $C$, for an arbitrary $y\in F_Q$  we have $a+y, b+y \in C$. At the same time, $x+y\in F_P+F_Q = F$ and $x+y\in (a+y,b+y)$. From $F\lhd C$ we have $ [a+y,b+y]\subset F$, hence, $a+y,b+y\in F$, and therefore $a,b\in F_P$. This shows that $F_P$ is a face of $P$. The proof for $F_Q$ is identical.
\end{proof}

In the proof of Theorem~\ref{thm:everythingispossible} presented next we use an inductive argument to explicitly construct a compact convex set with a given facial pattern from a lower dimensional example for a truncated sequence. The key observation is that the Minkowski sum of an arbitrary compact convex set with a unit ball does not generate faces of any new dimensions (compared to the original set) other than possibly the fully dimensional face that coincides with the sum, which follows directly from Lemma~\ref{lem:sumoffaces}. We sketched the Minkowski sum of two simple compact convex sets with a Euclidean ball in  Fig.~\ref{fig:minkowski} to illustrate this argument.
\begin{figure}[ht]
	\centering
	\includegraphics[scale=0.25]{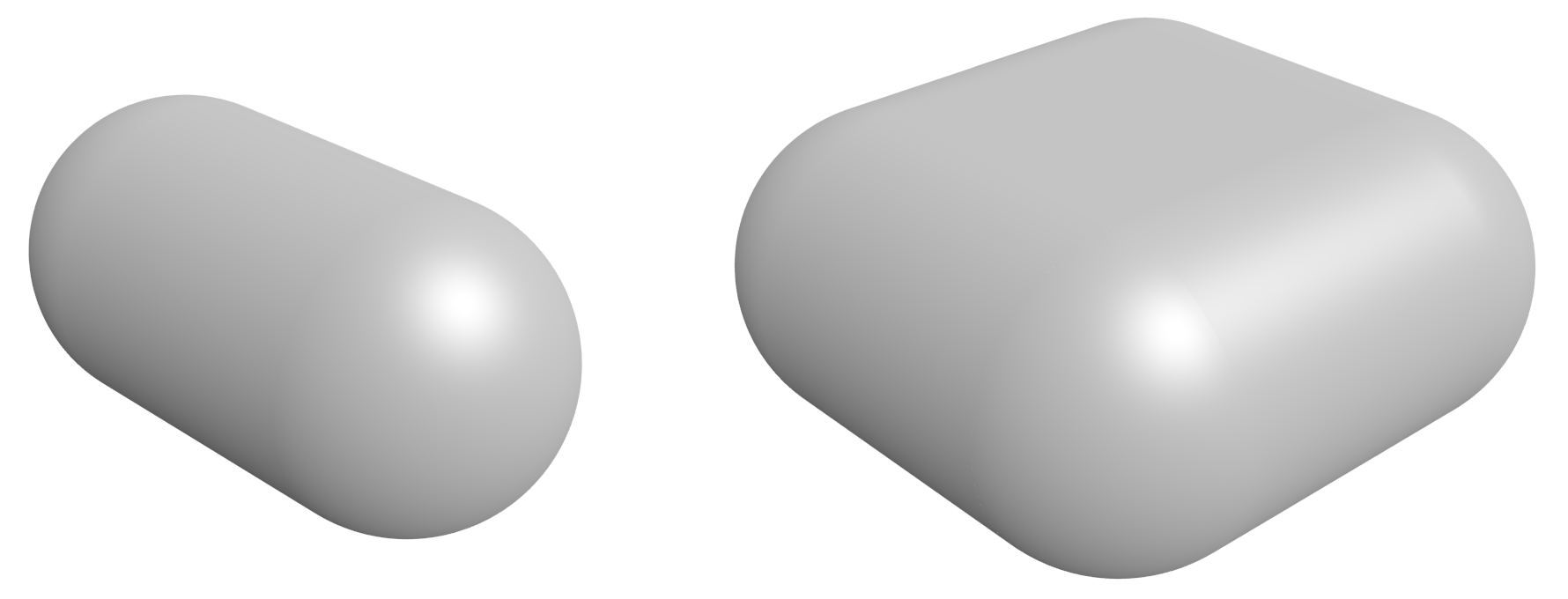}
	\caption{Minkowski sum of a line segment and a unit sphere (on the left hand side), and of a unit square and a sphere (on the right hand side).}
	\label{fig:minkowski}
\end{figure}

\begin{proof}[Proof of Theorem~\ref{thm:everythingispossible}] We use induction on $d_k$ to demonstrate the result. Our induction base is lower dimensional examples discussed earlier. For all increasing sequences of positive numbers $(d_1, \dots, d_k)$ with $d_k \leq 2$  we have found the relevant examples. They are realised by a point, line segment, disk and triangle.

Assume that our assertion is proven for all sequences $(d_1,\dots,d_k)$ with $d_k \leq m$. We will show that the statement is true for $d_k = m+1$. Choose an arbitrary sequence $d = (d_1, \dots, d_{k})$, where $d_k = m+1$. If $d=(d_k)$, the sequence is realised by the Euclidean  unit ball in $\R^{m+1}$. If the sequence contains more than one number, consider the truncated sequence $d' = (d_1,d_2,\dots, d_{k-1})$. Since $d_{k-1}<d_k$, we have $l:= d_{k-1} \leq m$, and there exists a compact convex set $Q\subset \R^l$ that realises the sequence $d'$ in $l=d_{k-1}$-dimensional space. We embed the set $Q$ in the $m+1$-dimensional space by letting $Q' := Q\times \{0_{m+1-l}\}$. Observe that since the definition of the face is algebraic, the facial pattern of the set $Q'$ is identical to the one of $Q$. Let $B$ be the unit ball in $\R^{m+1}$. We let
$$
C:= B + Q'
$$
and claim that $d$ is the facial pattern of $C$.

From Lemma~\ref{lem:sumoffaces} every face of $C$ can be represented as the sum of faces of $Q'$ and $B$. Since the only faces of $B$ are the set itself and the singletons on the boundary, the only possible dimensions of the faces of the set $C$ can come from the sequence $(d_1, \dots, d_k)$.  To show that no facial dimensions are lost, observe that if $e$ denotes the unit vector  $(0,0,\dots, 1)\in B$, then the set $\{e\}+Q'$ is a face of $C$ (hence all its faces are also faces of $C$). Indeed, for  the hyperplane $H  = \{x\, |\, \langle e, x\rangle = x_{m+1} =  1\} $ supports $C$ (notice that for every $x=q+b\in C$ with $q\in Q'$ and $b\in B$ we have $x_{m+1} = 0 + q_{m+1}\leq 1$), moreover,
\begin{align*}
H\cap C & = \{q+b\,|\, q\in Q', b\in B, q_{m+1}+b_{m+1} = 1\}\\
& =\{q+b\,|\,q\in Q', b\in B, b_{m+1} = 1\} \\
& = \{e\} + Q'.
\end{align*}
It is not difficult to observe (e.g., see \cite[Section~18]{Rockafellar}) that any supporting hyperplane slices off a face from a convex set, hence, $F = \{e\}+Q'\lhd C$. This face is linearly isomorphic to $Q$, and hence the facial structure of $F$ conicides with the facial structure of $Q$, giving all possible dimensions of faces from the sequence $d'$. The face of the maximal dimension $m+1$ is given by the set $C$ itself, as it has a nonempty interior (take any point from $Q'$ and sum it with an open ball).
\end{proof}

\section{Fractal convex sets}

Observe that polytopes not only possess faces of all possible dimensions, but their faces are also arranged in a very regular fashion: the union of the edges of a polytope is a one-dimensional set (here we refer to a general notion of Hausdorff dimension, rather than the dimension of the affine hull that is useful for convex sets), the union of all two dimensional faces is two dimensional, and so on. More generally, the union of all faces of a polytope of a given dimension is a set of the same dimension. This is not the case for a more general setting: for instance, the dimension of the union of all extreme points of a Euclidean ball in $\R^n$ is $n-1$, a stark contrast with the polyhedral case. Hence it is natural to study the dimension of the unions of equidimensional faces. The purpose of this section is to present some examples  which emerged from the discussions during the MATRIX program, namely nontrivial sets with fractal facial structure and hence noninteger dimensions of the said unions; these form the foundation for our ongoing research on this topic.


Some work on fractals and convexity has been done before (see the recent work \cite{IFSConvex} and references therein), but we are not aware of any references studying the particular problems that we propose here. We focus on two examples of convex sets that are generated in a natural way by spherical fractals. The finite root system and Coxeter system are fundamental concepts in Lie algebras, which is very important in many branches of mathematics. Given a finite root system, there is a natural associated finite Coxeter group, which is the Weyl group. People in the field of geometric group theory consider finite Coxeter groups are well-studied and explained in liberature, see \cite{humph}. Therefore, we are more interested in the behaviours of infinite Coxeter groups. One such fractal comes from a recent work \cite{Tian} by one of our co-authors (curiously from the study of infinite Coxeter groups), another one is constructed via projecting the Sierpinski triangle onto the unit sphere.

We first consider a fractal set on a sphere and then take its convex hull, hence generating a convex set. Our first example is constructed in a similar way to the Apollonian gasket: we take the unit sphere and construct a tetrahedron whose edges touch the sphere (see Fig.~\ref{fig:gasket-construction}),
\begin{figure}[ht]
	\centering
	\includegraphics[height = 120pt]{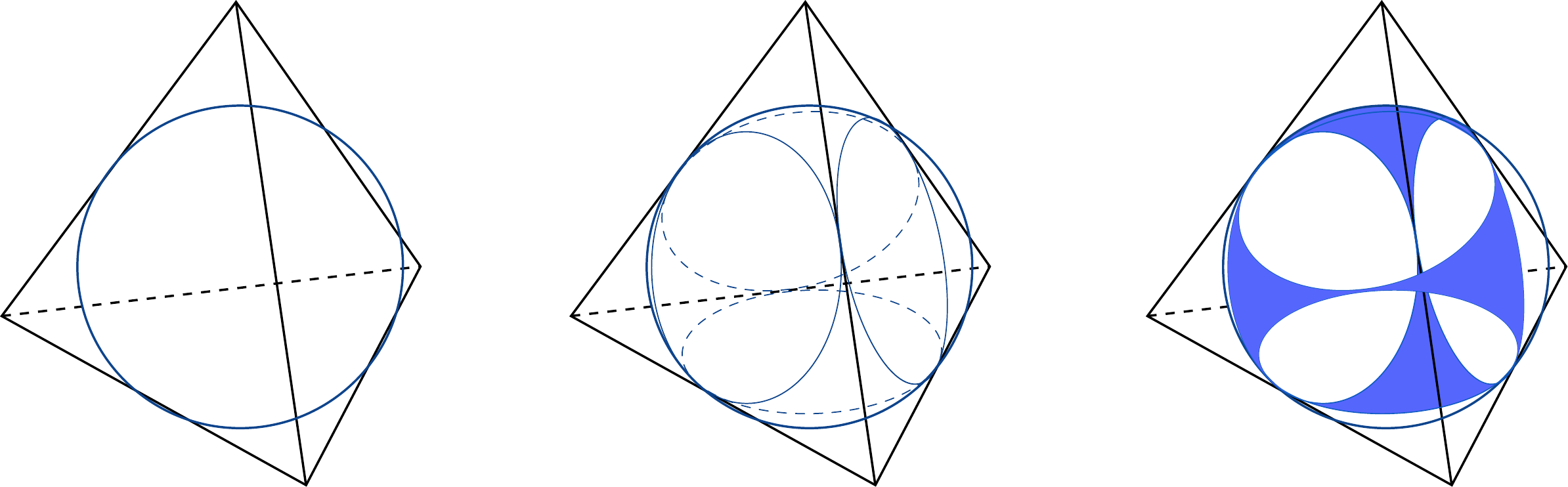}
	\caption{Construction of the spherical gasket.}
	\label{fig:gasket-construction}
\end{figure}
then consider the intersection of the sphere with the tetrahedron. After that, we continue slicing off spherical caps in such a way that they are tangential to the existing slices (see Fig.~\ref{fig:fractals}).
\begin{figure}[ht]
	\centering
	\includegraphics[height = 180pt]{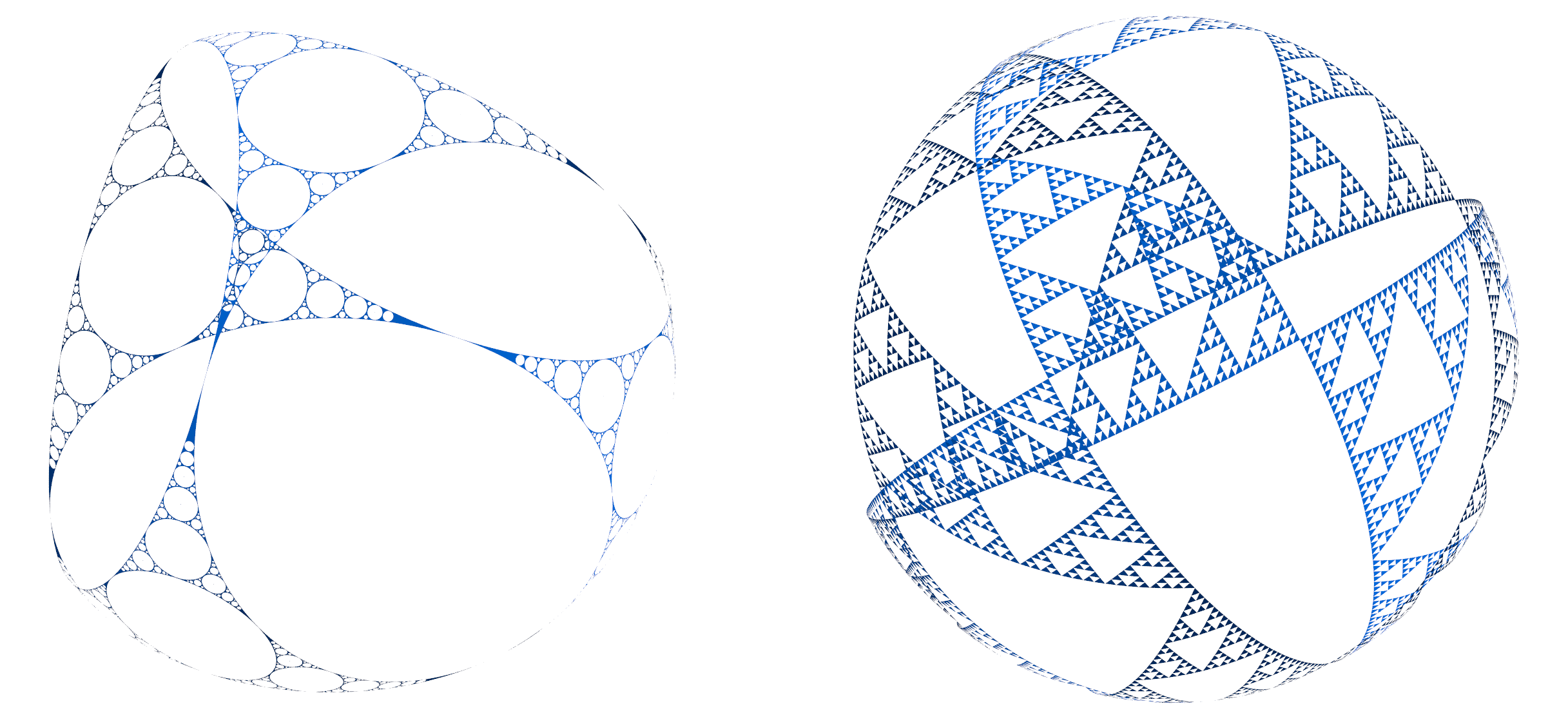}
	\caption{Apollonian gasket on a sphere and Sierpinski triangles.}
	\label{fig:fractals}
\end{figure}
The resulting body is a spherical fractal, which is also a convex set. If we now take its convex hull, the extreme points of this convex set would be exactly the points on the fractal set, with remaining proper faces disks that result from the sliced off spherical caps. Notice that this structure is somewhat similar to the compact convex set obtained as the intersection of the cone of symmetric positive semidefinite matrices of dimension $3\times 3$ with an affine  subspace defined by matrices with a constant trace
$$
C := \Sym_+^3 \cap \{M\, |\, \tr(M) = 1\}.
$$
This set has dimension 5 however.

Algebraically, this particular fractal set is generated by the infinite Coxeter group with following group presentation:
$$G = \langle s_1, s_2, s_3, s_4 \ | \ (s_i)^2 = (s_i s_j)^{\infty} = 1 \rangle$$
The fractal sets are generated by \emph{limit roots}, see \cite{Tian}. Limit roots exhibit peculiar geometric behaviour. Even though Coxeter groups are generated by affine reflections across hyperplanes, when we compute the roots of the group and project them down to a lower dimensional affine hyperplane, the set of limit roots behaves like a fractal set, giving self-similar patterns that cannot be obtained by reflecting across any hyperplanes.

This approach can be applied to constructing other spherical fractals. For instance, one can generalise the Sierpinski carpet by cutting out triangular pieces of the sphere in a similar fashion. The convex set obtained after taking the convex hull of this spherical fractal will have faces of all possible dimensions.

The Hausdorff dimension of the union of the extreme points is non-integer in both cases, and coincides with the dimension of the relevant two-dimensional objects. It would be interesting to study the conditions that can be imposed on the facial dimensions to define good or regular convex sets.

\section{Acknowledgements}
The ideas in this paper were motivated by the discussions that took place during a recent MATRIX program in approximation and optimisation held in July 2016. We are grateful to the MATRIX team for the enjoyable and productive research stay. We would also like to thank the two referees for their insightful corrections and remarks.

\bibliographystyle{plain}
\bibliography{refs}

\end{document}